\documentclass[10pt]{article}
\usepackage{amsmath,amsthm,amsfonts,amssymb,mathrsfs}
\usepackage{tikz}
\usepackage{graphicx}
\usepackage{hyperref}
\hypersetup{hidelinks}
\usepackage{color}
\usepackage[T1]{fontenc}
\usepackage{amsthm}
\usepackage[margin=1in]{geometry}
\allowdisplaybreaks

\newtheorem{theorem}{Theorem}[section]
\newtheorem{lemma}[theorem]{Lemma}
\newtheorem{corollary}[theorem]{Corollary}
\newtheorem{proposition}[theorem]{Proposition}
\newtheorem{definition}[theorem]{Definition}
\newtheorem{remark}[theorem]{Remark}
\newtheorem{hypothesis}[theorem]{Hypothesis}

\numberwithin{equation}{section}

\title{Continuity of higher-order derivatives for integrated density of states of the discrete Anderson model with respect to the disorder parameter}
\author{
Dhriti Ranjan Dolai$^{1}$
\quad and \quad
Naveen Kumar$^{1}$\\[0.5em]
$^{1}$ Indian Institute of Technology Dharwad,\\
Dharwad 580011, Karnataka, India.\\
\texttt{dhriti@iitdh.ac.in}, \texttt{222071001@iitdh.ac.in}
}

\date{}
\begin{document}
\maketitle

\begin{abstract}
\noindent We derive quantitative continuity estimates for the higher-order derivatives of the integrated density of states (IDS) with respect to the disorder parameter for the Anderson model on $\ell^2(\mathbb{G})$. Here $\mathbb{G}=\mathbb{Z}^d$ or $\mathbb{B}$, where $\mathbb{B}$ denotes the Bethe lattice. Our results hold in the regime of strong disorder, where  entire spectrum is localized. We assume sufficient smoothness of the density of the single site distribution so that the IDS admits higher-order derivatives. More precisely, we establish bounds on the difference between higher-order derivatives of the IDS in terms of the differences in the disorder parameters.\\~\\
{\bf MSC (2020) Classification:} 82B44, 47B93\\
{\bf Keywords:} Random Schr\"odinger operators, Anderson models, integrated density of states, disorder parameter.
\end{abstract}
\section{Introduction}
The Bethe lattice $\mathbb{B}$ is an infinite connected graph with no closed loops and fixed degree $K+1$ (the number of nearest neighbors) at each vertex. The degree is called the \emph{coordination number}, and the connectivity $K$ equals the coordination number minus one. The distance between two vertices $n_1$ and $n_2$ is denoted by $d(n_1,n_2)$ and is defined as the length of the shortest path connecting them.
We denote by $\ell^2(\mathbb{B})$ the Hilbert space
\[
\ell^2(\mathbb{B})
=
\left\{
u : \mathbb{B}\to\mathbb{C} :
\sum_{n\in\mathbb{B}} |u(n)|^2 < \infty
\right\}.
\]
We fix an arbitrary vertex of $\mathbb{B}$ as the origin and denote it by $0$.
For $n_1,n_2 \in \mathbb{Z}^d$, the distance is defined by $d(n_1,n_2)=\|n_1-n_2\|_1$.
The Hilbert space $\ell^2(\mathbb{Z}^d)$ is defined by
\[
\ell^2(\mathbb{Z}^d)
=
\left\{
u:\mathbb{Z}^d\to\mathbb{C} :
\sum_{n\in\mathbb{Z}^d}|u(n)|^2<\infty
\right\}.
\]
\noindent We consider two Anderson models $H_j^\omega$, $j=1,2$, acting on $\ell^2(\mathbb{G})$, where $\mathbb{G}=\mathbb{B}$ or $\mathbb{Z}^d$, associated with disorder parameters $\lambda_j$. These operators are defined by
\begin{equation}
H_j^\omega = \Delta + \lambda_j V^\omega,
\quad \omega\in\Omega, \quad j=1,2.
\label{anderson}
\end{equation}
Here $\Delta$ denotes the discrete Laplacian acting on $\ell^2(\mathbb{G})$, defined by
\begin{equation*}
(\Delta \psi)(n)
=
\sum_{d(k,n)=1} \psi(k),
\quad
\psi=\{\psi(n)\}_{n\in\mathbb{G}}\in\ell^2(\mathbb{G}).
\end{equation*}
The random potential $V^\omega$ is defined by
\[
(V^\omega \psi)(n)
=
\omega_n\,\psi(n),
\quad n\in\mathbb{G},
\]
where $\{\omega_n\}_{n\in\mathbb{G}}$ is a family of independent and identically distributed non-degenerate real-valued random variables with common distribution $\mu$, referred to as the \emph{single site distribution (SSD)}.

\noindent We consider $H_j^\omega$ as operator-valued random variables on the probability space $(\Omega,\mathcal{B}_\Omega,\mathbb{P})$.
The underlying infinite product probability space
\[
(\Omega,\mathcal{B}_\Omega,\mathbb{P})
=
\left(\mathbb{R}^{\mathbb{G}},\mathcal{B}_{\mathbb{R}^{\mathbb{G}}},
\bigotimes_{n\in\mathbb{G}}\mu\right)
\]
is constructed via the Kolmogorov extension theorem, and we write
$\omega=(\omega_n)_{n\in\mathbb{G}}\in\Omega$.
For each $\omega\in\Omega$, the operator $H_j^\omega$ is self-adjoint on
$\ell^2(\mathbb{G})$, with a common dense core given by finitely supported vectors.
Moreover, the family $\{H_j^\omega\}_{\omega\in\Omega}$ is measurable in the sense that for any
$y,z\in\ell^2(\mathbb{G})$, the map $\omega\mapsto\langle y,H_j^\omega z\rangle
$ from $\Omega$ to $\mathbb{C}$ is measurable for $j=1,2$.
We refer to \cite{CL} for further discussion of measurability properties of
random Schr\"odinger operators.

\noindent Let $\{\delta_n\}_{n\in\mathbb{G}}$ denote the canonical orthonormal basis of
$\ell^2(\mathbb{G})$.
The \emph{density of states measure (DOSm)} $\nu_j$ associated with $H_j^\omega$ is given by
\[
\nu_j(A)
=
\mathbb{E}\!\left[
\langle \delta_0,
E_{H_j^\omega}(A)\delta_0
\rangle
\right],
\quad
A\subset\mathbb{R}\ \text{Borel}.
\]
The corresponding distribution function
$\mathcal{N}_j(x)=\nu_j((-\infty,x])$, $x\in\mathbb{R}$,
is known as the \emph{integrated density of states (IDS)}.
If the measure $\nu_j$ is absolutely continuous with respect to Lebesgue measure,
its Radon--Nikodym derivative is called the \emph{density of states function (DOSf)},
and is denoted by $\frac{d\nu_j}{dx}=g_j(x)=\mathcal{N}'_j(x).$
Assuming that the single site distribution (SSD) $\mu_j$ is absolutely continuous,
the Wegner estimate implies that the density of states measure $\nu_j$ is also absolutely continuous.
We refer to \cite{Kir,AMSW,Ackl} for further details on the integrated density of states (IDS) and the Anderson model.

\noindent We now state our assumptions on the disorder parameters $\lambda_j$ and the single site distribution (SSD) $\mu$.
\begin{hypothesis}
\label{hyp}
\begin{enumerate}
\item[(i)]
We assume that $\lambda_1, \lambda_2 \ge \lambda_0$, where $\lambda_0 \gg 1$ is chosen sufficiently large so that the entire spectrum is localized when the disorder strength is at least $\lambda_0$.
\item[(ii)]
The single site distribution $\mu$ is compactly supported on $[a,b]$, and its density $\frac{d\mu}{dx}=\rho$ belongs to $C_c^{m+1}((a,b))$
for some $m\in\mathbb{N}\cup\{0\}$. Moreover, it satisfies
\[
\sup_{0\le k\le m+1}\|\rho^{(k)}\|_\infty<\infty, \quad \text{where}\quad \rho^{(k)}=\frac{d^k\rho}{dx^k}
\]
\end{enumerate}
\end{hypothesis}

\begin{remark}
\label{est-frc}
Consider the operator $H_\lambda^\omega=\Delta+\lambda V^\omega$. 
Then the spectrum of $H_\lambda^\omega$ is given by 
$\sigma(H_\lambda^\omega)=[-2d,2d]+\lambda[a,b]$ on $\ell^2(\mathbb{Z}^d)$ for a.e.\ $\omega$, 
and $\sigma(H_\lambda^\omega)=[-2\sqrt{K},2\sqrt{K}]+\lambda[a,b]$ on $\ell^2(\mathbb{B})$ for a.e.\ $\omega$. See \cite{Kir,AMSW,Ackl} for more details.
\end{remark}

\noindent Moreover, under condition (i) of Hypothesis~\ref{hyp}, the entire spectrum exhibits
exponential localization. In particular, for some $0<s<1$, the fractional moment criterion holds:
\begin{equation}\label{frc-mnts}
\sup_{\Re(z)\in\mathbb{R},\ \Im(z)>0}
\mathbb{E}\!\left[
\big|\langle\delta_n,(H_\lambda^\omega-z)^{-1}\delta_k\rangle\big|^s
\right]
\le
C(s,\lambda)\,e^{-\xi(s,\lambda)\,d(n,k)},
\quad \forall\, n,k\in\mathbb{Z}^d.
\end{equation}
Fractional-moment localization estimates of the form~\eqref{frc-mnts} were established in the works of Aizenman--Molchanov~\cite{AM}, Aizenman~\cite{aiz}; see also~\cite{st}. In particular, for the Anderson model on $\ell^2(\mathbb{G})$ in the large-disorder regime $\lambda\ge\lambda_0$, these results yield explicit decay rates. More precisely, one may take
\[
\begin{aligned}
\xi(s,\lambda)
&:=
-s\ln\!\left(\frac{2d\,C_{s,\rho}}{\lambda}\right)
\;\text{or}\;
-s\ln\!\left(\frac{(K+1)\,C_{s,\rho}}{\lambda}\right)
\quad\text{when }\mathbb{G}=\mathbb{Z}^d \text{ or } \mathbb{B}, \\
C(s,\lambda)
&:=
\frac{(2\sqrt{2})^s}{(1-s)|\lambda|^s}
\quad\text{for }\mathbb{G}=\mathbb{Z}^d \text{ or } \mathbb{B}.
\end{aligned}
\]
where $0<C_{s,\rho}<\infty$ is a constant depending only on the fractional-moment exponent $s$ and the single site density $\rho$. It follows that
$\xi(s,\lambda)\ge\xi(s,\lambda_0)$ and $C(s,\lambda)\le C(s,\lambda_0)$ for all $\lambda\ge\lambda_0$.
Hence, the fractional moment criterion~\eqref{frc-mnts} can be rewritten with uniform constants as
\begin{equation}\label{frc-mnts1}
\sup_{\Re(z)\in\mathbb{R},\ \Im(z)>0}
\mathbb{E}\!\left[
\big|\langle\delta_n,(H_\lambda^\omega-z)^{-1}\delta_k\rangle\big|^s
\right]
\le
C(s,\lambda_0)\,e^{-\xi(s,\lambda_0)\,d(n,k)},
\quad  \forall\,\lambda\ge\lambda_0.
\end{equation}
Under Hypothesis~\ref{hyp}, the integrated density of states (IDS) $\mathcal{N}_j(\cdot)$ associated with $H_j^\omega$ ($j=1,2$) admits higher-order derivatives.

\begin{remark}
\label{dr-cnt}
Under Hypothesis~\ref{hyp}, together with the fractional-moment criterion~\eqref{frc-mnts}, the results of~\cite{DKM,dh} imply that $\mathcal{N}_j \in C^m(\mathbb{R})$ for $j=1,2$.
\end{remark}
\noindent We denote $\mathcal{N}_j' = g_j$. For $j=1,2$, we define $\mathcal{N}_j^{(k+1)} = g_j^{(k)}, \quad 0 \le k \le m-1$,
\begin{remark}
\label{cpt}
Since the single site distribution (SSD) is a compactly supported absolutely continuous probability measure by [(ii), Hypothesis~\ref{hyp}], the spectrum of $H_j^\omega$ is a deterministic compact set for a.e. $\omega$. Moreover, the density of states measure (DOSm) $\nu_j$ is a compactly supported, absolutely continuous probability measure on $\mathbb{R}$, and we write $d\nu_j = g_j(x)\,dx$. Consequently, $g_j^{(k)} \in C_c(\mathbb{R})$; that is, $g_j^{(k)}$ is a continuous function on $\mathbb{R}$ with compact support for $0 \le k \le m-1$.
\end{remark}

\noindent We are now ready to state the main result.
\begin{theorem}
\label{main}
Let $\mathcal{N}_j(\cdot)$ denote the integrated density of states (IDS) associated with the operator $H_j^\omega$ defined in~\eqref{anderson}, for $j=1,2$.
Assume Hypothesis~\ref{hyp}. Then for $\lambda_1,\lambda_2\in[\lambda_0,\tilde{\lambda}_0]$ we have
\begin{equation}
\label{est-ids}
\sup_{x \in \mathbb{R}}
\big| \mathcal{N}_1^{(k+1)}(x) - \mathcal{N}_2^{(k+1)}(x) \big|
\le
D_k\,|\lambda_1-\lambda_2|^{\frac{m-k-2}{m}},
\quad
0 \le k < m-2,
\end{equation}
where $\mathcal{N}_j^{(k+1)}=\frac{d^{k+1}\mathcal{N}_j}{dx^{k+1}} .$
Here $D_k$ is a positive constant independent of $\lambda_1$ and $\lambda_2$ and depending only on $k$, $\lambda_0$, and $\tilde{\lambda}_0$.
\end{theorem}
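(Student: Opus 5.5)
The plan is to use a smoothing-and-interpolation argument. I regularize $g_j^{(k)}=\mathcal{N}_j^{(k+1)}$ at scale $\epsilon>0$, compare the two regularized objects directly through the resolvent identity in $\lambda$, and then optimize over $\epsilon$. The target exponent already indicates the balance. If the regularization error is $O(\epsilon^{m-k-2})$ and the comparison error is $O(|\lambda_1-\lambda_2|\,\epsilon^{-(k+2)})$, then choosing $\epsilon=|\lambda_1-\lambda_2|^{1/m}$ gives exactly $|\lambda_1-\lambda_2|^{(m-k-2)/m}$. When $|\lambda_1-\lambda_2|\ge 1$ the bound is trivial, because by Remarks~\ref{dr-cnt} and~\ref{cpt} the derivatives $g_j^{(k)}$ are uniformly bounded for $\lambda_j\in[\lambda_0,\tilde\lambda_0]$. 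So we may assume $|\lambda_1-\lambda_2|<1$.

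\textbf{Step 1: regularization.} Fix a Schwartz mollifier $\phi_\epsilon(x)=\epsilon^{-1}\phi(x/\epsilon)$ with $\int\phi=1$ and vanishing moments up to order $m$. Its Fourier transform should be smooth, so that $\phi_\epsilon$ can be written as a superposition of Poisson kernels $\Im (x-E-i\eta)^{-1}$ with $\eta\sim\epsilon$. The simplest version is a finite linear combination of Poisson kernels at scales $c_l\epsilon$ with coefficients chosen to kill the moments. Then
\[
\big(g_j^{(k)}*\phi_\epsilon\big)(x)=\partial_x^{k+1}\,\mathbb{E}\big[\langle\delta_0,\phi_\epsilon(x-H_j^\omega)\delta_0\rangle\big].
\]
Taylor expanding $g_j^{(k)}$ against the moment conditions gives
\[
\|g_j^{(k)}*\phi_\epsilon-g_j^{(k)}\|_\infty\le C\epsilon^{m-k-2}\sup_j\|g_j^{(m-2)}\|_\infty .
\]
This needs a bound on $g_j^{(m-2)}$, or on a H\"older norm of $g_j^{(m-1)}$, that is uniform in $\lambda_j\in[\lambda_0,\tilde\lambda_0]$. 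I would extract this from the proofs in \cite{DKM,dh}, whose constants depend only on $\|\rho^{(l)}\|_\infty$, $l\le m+1$, and on the fractional-moment constants. Those constants are uniform by \eqref{frc-mnts1}.

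\textbf{Step 2: comparison of resolvents.} Write $z=x+i\eta$. Each $x$-derivative of the Poisson kernel costs a factor $\eta^{-1}$, so the $(k+1)$-th $x$-derivative of the regularized quantity is a combination of $\mathbb{E}\langle\delta_0,(H_j^\omega-z)^{-(k+2)}\delta_0\rangle$. The second resolvent identity gives
\[
(H_1-z)^{-1}-(H_2-z)^{-1}=(\lambda_2-\lambda_1)(H_1-z)^{-1}V^\omega(H_2-z)^{-1}.
\]
Expanding $(H_1-z)^{-(k+2)}-(H_2-z)^{-(k+2)}$ telescopically produces $k+2$ terms. Each term is $(\lambda_2-\lambda_1)$ times a product of $k+3$ resolvents with one $V^\omega$ inserted, and $|\omega_n|\le\max(|a|,|b|)$.

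A crude operator-norm bound gives $|\lambda_1-\lambda_2|\,\eta^{-(k+3)}$, which is one power of $\eta$ too many. To recover the sharp power $\eta^{-(k+2)}$, I would instead use the averaging structure. Write $\langle\delta_0,(H-z)^{-1}V(H-z)^{-1}\delta_0\rangle=\sum_n\omega_n G(0,n;z)G(n,0;z)$. Then use the rank-one spectral-averaging (Wegner/Krein) trick of \cite{DKM}: integrate by parts in $\omega_n$ against $\rho$, which converts $\partial_z$ into a $\partial_{\omega_n}$-derivative absorbed by $\rho'$. Combined with H\"older and the fractional moment bound \eqref{frc-mnts1}, this bounds $\sum_n\mathbb{E}|G(0,n)G(n,0)|$ by a summable quantity $\sum_n e^{-\xi s\,d(0,n)/2}$. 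On $\mathbb{B}$ this sum converges because $\xi(s,\lambda_0)>\ln K$ for large $\lambda_0$.

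\textbf{Main obstacle.} The hardest step is exactly this power counting in Step 2: showing that the $\lambda$-difference of the $(k+1)$-st regularized derivative is bounded by $C|\lambda_1-\lambda_2|\,\epsilon^{-(k+2)}$ with constants uniform in $\lambda\in[\lambda_0,\tilde\lambda_0]$. I would first try to transfer as many $z$-derivatives as possible onto $\rho$ via integration by parts in the single-site variables. This is available because $\rho\in C_c^{m+1}$, and it is why $m$ must exceed $k+2$. I would handle the infinite sum over sites with the exponential decay from \eqref{frc-mnts1}. Finally, combining Steps 1 and 2 with $\epsilon=|\lambda_1-\lambda_2|^{1/m}$ yields \eqref{est-ids} with $D_k$ depending only on $k$, $\lambda_0$, $\tilde\lambda_0$ (and $\rho$).
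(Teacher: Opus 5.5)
Your architecture is sound: mollify at scale $\epsilon$, compare the mollified objects with the resolvent identity, then take $\epsilon=|\lambda_1-\lambda_2|^{1/m}$. However, Step 2 as written contains a miscount, and that miscount creates your ``main obstacle''. Moreover, the remedy you sketch for it would not work.

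First, the miscount. Since $\mathbb{E}\langle\delta_0,\phi_\epsilon(x-H_j^\omega)\delta_0\rangle=\int\phi_\epsilon(x-y)\,d\nu_j(y)=(\phi_\epsilon*g_j)(x)$, the function $g_j^{(k)}*\phi_\epsilon$ is $\partial_x^{k}$ of this quantity, not $\partial_x^{k+1}$. For a Poisson kernel $P_\eta(u)=\eta/(\pi(u^2+\eta^2))$ it equals $\frac{k!}{\pi}\Im\,\mathbb{E}\langle\delta_0,(H_j^\omega-z)^{-(k+1)}\delta_0\rangle$ with $z=x+\mathrm{i}\eta$. So only $k+1$ resolvents appear, not $k+2$.

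Second, the averaging mechanism you propose is false as stated. For a single operator $H$ with real matrix elements, $G(n,0;z)=G(0,n;z)$. The Ward identity then gives $\sum_n\mathbb{E}|G(0,n;z)G(n,0;z)|=\mathbb{E}\,\Im G(0,0;z)/\eta=\pi(P_\eta*g)(x)/\eta$, which diverges as $\eta\to0$ wherever $g(x)>0$. Fractional moments control only $\mathbb{E}|G|^s$ with $s<1$, so they cannot bound this by an $\eta$-independent series $\sum_n e^{-\xi s\,d(0,n)/2}$. Also, the integration by parts in $\omega_n$ behind the derivative bounds for the DOS cannot be carried out once absolute values have been taken.

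With the correct count, nothing delicate is needed. Telescoping with $(H_1^\omega-z)^{-1}-(H_2^\omega-z)^{-1}=(\lambda_2-\lambda_1)(H_1^\omega-z)^{-1}V^\omega(H_2^\omega-z)^{-1}$ gives the crude operator-norm bound $(k+1)M|\lambda_1-\lambda_2|\eta^{-(k+2)}$, where $M=\max\{|a|,|b|\}$. This is exactly the $|\lambda_1-\lambda_2|\epsilon^{-(k+2)}$ you need, and the theorem follows. If you do want to gain a power of $\eta$, the right tool is $\|(H-z)^{-b}\delta_0\|^2\le\eta^{1-2b}\Im G(0,0;z)$ together with $\mathbb{E}\,\Im G(0,0;z)\le\pi\|g\|_\infty$, not fractional moments.

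Two smaller corrections:
\begin{itemize}
\item A finite combination of Poisson kernels is rational, hence not Schwartz. Its polynomial tail is still enough for the moments you use.
\item You need $\|g_j^{(l)}\|_\infty$ bounded uniformly in $\lambda_j$, both in Step 1 and for the case $|\lambda_1-\lambda_2|\ge1$. That uniformity is not contained in Remarks~\ref{dr-cnt} and~\ref{cpt}; it is the paper's Proposition~\ref{up-bd}.
\end{itemize}

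The corrected argument is the physical-space mirror of the paper's proof. The paper bounds $|g_1^{(k)}-g_2^{(k)}|\le\int|t|^k|\widehat g_1-\widehat g_2|\,dt$. It controls the range $|t|\le A$ by the Duhamel bound \eqref{ftme}, which is the time-domain analogue of your resolvent identity and uses only unitarity. It controls $|t|>A$ by the decay \eqref{frdy}, and its cutoff $A=|\lambda_1-\lambda_2|^{-1/m}$ plays the role of your $1/\epsilon$.

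The Fourier route avoids building a moment-killing mollifier. Your route, once fixed, avoids the derivative lost by passing to the $L^1$ norm of $|t|^k\widehat g_j$. Feeding the uniform bound on $g_j^{(m-1)}$ into your Taylor step gives regularization error $O(\epsilon^{m-1-k})$. That yields the better exponent $\frac{m-1-k}{m+1}$, which remains valid even for $k=m-2$.
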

\noindent First, Hislop and Marx~\cite{HM} studied the dependence of the integrated density of states on the single site distribution (SSD) for a class of discrete random Schr\"odinger operators and established a quantitative form of continuity in the weak-* topology. Subsequently, Shamis~\cite{MS} developed an alternative approach based on Ky Fan inequalities and obtained a sharp version of the estimate in~\cite{HM}. Applying the results of~\cite{HM,MS} to the model~\eqref{anderson}, together with Hypothesis~\ref{hyp}, \cite[Remark~3.1]{MS}, and Proposition~\ref{kr}, yields
\begin{equation}
\label{olydrv}
\sup_{x\in\mathbb{R}}
\big| \mathcal{N}_1(x)-\mathcal{N}_2(x) \big|
\le
C\, d_{KR}(\mu_1,\mu_2)^{\frac12}
\le
C\,|\lambda_1-\lambda_2|^{\frac12}.
\end{equation}
In this work, our goal is to obtain estimates of the form~\eqref{olydrv} for higher derivatives (when they exist) of $\mathcal{N}_1$ and $\mathcal{N}_2$, as in~\eqref{est-ids}. The estimate~\eqref{est-ids} is not a consequence of~\eqref{olydrv}. In other words, \eqref{est-ids} cannot be obtained by a straightforward application of the results in~\cite{HM,MS}.

\noindent Since higher-order derivatives of the integrated density of states (IDS) $\mathcal{N}_j(\cdot)$ are mostly known in the presence of localization, we refer to~\cite{DKM,dh} and the references therein for further details. Accordingly, we consider the model $H_j^\omega$ defined in~\eqref{anderson} in the high-disorder regime.

\noindent In our proofs, we did not use any finite-volume restriction of $H^\omega_\lambda$; therefore, one may expect that the arguments apply to both $\mathbb{Z}^d$ and the Bethe lattice $\mathbb{B}$.

\section{Proofs}
\noindent In this section, we present proofs of our results. To establish these results, we use Fourier transform techniques. 
We begin by obtaining bounds on higher-order derivatives of the density of a measure using bounds on derivatives of its Stieltjes (Borel) transform.

\noindent Our goal is to obtain an upper bound for $\|g_j^{(k)}\|_\infty$.
Since $g_j^{(k)}$ is continuous with compact support, it follows that
$\|g_j^{(k)}\|_\infty < \infty$.
However, this bound may depend on the disorder parameter $\lambda_j$.
In order to obtain bounds that are independent of $\lambda_j$,
we use a lemma relating derivatives of a function to derivatives of its
Borel transform. Once such bounds are established, they imply sufficient decay of the Fourier transform of the density of states function $g_j$, independently of $\lambda_j$.
This decay plays a key role in the proof of Theorem~\ref{main}.

\begin{lemma}
\label{drv-br}
Let \(g \in L^{1}(\mathbb{R}, dx) \) and let its Borel transform be defined by $F(z) = \int_{\mathbb{R}} \frac{g(x)}{x - z}\, dx .
$ Let \( J \subset \mathbb{R} \) be a bounded interval and assume that
\begin{equation}
\label{fnbr}
\sup_{\Re(z) \in J,\; \Im(z) > 0}
\left|
\Im\!\left(\frac{d^{k}}{dz^{k}} F(z)\right)
\right|
< \infty,
\quad 0 \le k \le m+1.
\end{equation}
Then \( g \in C^{m}(J) \), and for all \( 0 \le k \le m \) we have
\begin{equation}
\label{lemma-est}
\sup_{E \in J} | g^{(k)}(E) |
\le
\sup_{\Re(z) \in J,\; \Im(z) > 0}
\left|
\Im\!\left(\frac{d^{k}}{dz^{k}} F(z)\right)
\right|
+
|J|
\sup_{\Re(z) \in J,\; \Im(z) > 0}
\left|
\Im\!\left(\frac{d^{k+1}}{dz^{k+1}} F(z)\right)
\right|.
\end{equation}
Here \( |J| \) denotes the Lebesgue measure of \( J \).
\end{lemma}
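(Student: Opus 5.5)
We plan to use the Poisson representation of $\Im F$. For $z=E+i\varepsilon$ with $\varepsilon>0$,
\[
\frac{1}{\pi}\Im F(E+i\varepsilon)=(P_\varepsilon * g)(E), \qquad P_\varepsilon(x)=\frac{1}{\pi}\frac{\varepsilon}{x^2+\varepsilon^2}.
\]
Since $F$ is holomorphic in the upper half plane and $\frac{d}{dz}=\partial_E$ there, we have $\Im F^{(k)}(E+i\varepsilon)=\partial_E^k \Im F(E+i\varepsilon)$. Set $u_\varepsilon:=\frac1\pi \Im F(\cdot+i\varepsilon)=P_\varepsilon*g$. Then $u_\varepsilon\in C^\infty$ and $u_\varepsilon^{(k)}=\frac1\pi\Im F^{(k)}(\cdot+i\varepsilon)$. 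Hypothesis \eqref{fnbr} says that the family $\{u_\varepsilon^{(k)}\}_{\varepsilon>0}$ is uniformly bounded on $J$ for $0\le k\le m+1$. (The factor $1/\pi$ only improves the constants, so the estimate \eqref{lemma-est} as stated follows a fortiori.)

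The next step is compactness. Because $u_\varepsilon^{(k+1)}$ is uniformly bounded on $J$, the family $u_\varepsilon^{(k)}$ is uniformly bounded and equi-Lipschitz on $J$ for each $0\le k\le m$. By Arzel\`a--Ascoli (applied on the closure of $J$, after extending by continuity), we pick a sequence $\varepsilon_n\downarrow0$ along which $u_{\varepsilon_n}^{(k)}\to v_k$ uniformly on $J$ for every $0\le k\le m$ simultaneously. Uniform convergence of derivatives gives $v_0\in C^m(J)$ with $v_0^{(k)}=v_k$.

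Independently, since $g\in L^1$ and $P_\varepsilon$ is an approximate identity, $u_\varepsilon\to g$ in $L^1_{loc}$, hence a.e. along a subsequence. Therefore $g=v_0$ a.e. on $J$. Identifying $g$ with its continuous representative, we get $g\in C^m(J)$ and $g^{(k)}=\lim u_{\varepsilon_n}^{(k)}$ uniformly on $J$.

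For the estimate, we first note that pointwise $|g^{(k)}(E)|\le \sup_{\varepsilon}|u^{(k)}_\varepsilon(E)|$, which is already bounded by the first term of \eqref{lemma-est}. To produce exactly the stated form, with the $|J|$-term, we would instead argue via the mean value theorem: for $E,E_0\in J$,
\[
|g^{(k)}(E)|\le |g^{(k)}(E_0)|+|J|\sup_J|g^{(k+1)}|,
\]
and both terms are controlled by the limits of $u_\varepsilon^{(k)}$ and $u_\varepsilon^{(k+1)}$ (the latter for $k\le m$, using the uniform Lipschitz bound on $u^{(k)}_{\varepsilon}$, which passes to the limit). Either route yields \eqref{lemma-est}.

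The main obstacle is the careful identification of the limit with $g$ itself, i.e., the a.e.\ convergence of $P_\varepsilon*g$ together with the boundary issues of $J$. If $J$ is open, the equicontinuity argument is applied on compact subintervals and the bounds, being uniform, extend to all of $J$.
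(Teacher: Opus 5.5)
Your proof is correct, and it takes a different route from the paper for the regularity and boundary-value part; the final estimate is obtained the same way. The paper simply cites \cite[Lemma A.1]{DKM} for $g\in C^m(J)$. It then writes $F^{(k)}(z)=\int_{\mathbb{R}} g^{(k)}(x)(x-z)^{-1}\,dx$ by integration by parts and uses continuity of $g^{(k)}$ to get $\lim_{\epsilon\to0^+}\Im F^{(k)}(E+i\epsilon)=g^{(k)}(E)$. Finally it applies the fundamental theorem of calculus to $\Im F^{(k)}(\cdot+i\epsilon)$ on $[E_0,E]$ before letting $\epsilon\to0$.

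You instead prove the regularity from scratch. The bounds for $k\le m+1$ make $\{\frac1\pi\Im F^{(k)}(\cdot+i\varepsilon)\}_{\varepsilon>0}$ bounded and equi-Lipschitz on $J$ for $k\le m$. Arzel\`a--Ascoli on $\bar J$ then gives uniform limits, and the approximate-identity property of the Poisson kernel identifies the limit with $g$.

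What this buys you:
\begin{itemize}
\item The argument is self-contained.
\item It never needs the global integration-by-parts identity. That identity tacitly requires $g^{(k)}$ on all of $\mathbb{R}$, whereas the lemma only provides regularity on $J$.
\item It never needs a pointwise boundary-value statement at each $E\in J$, which would need care at the endpoints of $J$.
\item It gives uniform convergence of $\frac1\pi\Im F^{(k)}(\cdot+i\varepsilon_n)$ to $g^{(k)}$ on $\bar J$ directly, and even shows $g^{(m)}$ is Lipschitz.
\end{itemize}

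You also correctly keep the factor $\frac1\pi$ that the paper's identity \eqref{cp-im} drops; this is harmless since it only improves the bound. You rightly note that the first term of \eqref{lemma-est} alone already controls $\sup_J|g^{(k)}|$, so the FTC step and the $|J|$-term are superfluous. Like the paper, you tacitly assume $g$ is real-valued, which is needed for $\Im F(\cdot+i\varepsilon)=\pi P_\varepsilon*g$. This holds in the application, where $g$ is a density of states.
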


\begin{proof}
The fact that $g \in C^m(J)$ follows from \cite[Lemma A.1]{DKM}.

\noindent To prove the estimate, we first observe, using integration by parts, that
\begin{equation*}
\frac{d^k}{dz^k} F(z)
= \int_{\mathbb{R}} \frac{g^{(k)}(x)}{x - z} \, dx,
\quad 0 \le k \le m.
\end{equation*}

\noindent Since $g^{(k)}$ is continuous on $J$ for $0 \le k \le m$, we obtain
\begin{equation}
\label{cp-im}
\lim_{\epsilon \to 0^+}
\Im\!\left( \frac{d^k}{dz^k} F(E + i\epsilon) \right)
= g^{(k)}(E),
\quad \forall\, E \in J.
\end{equation}

\noindent Since $F(z)$ is analytic on the upper half-plane $\mathbb{C}^+ = \{ z \in \mathbb{C} : \Im(z) > 0 \},
$ the fundamental theorem of calculus implies that for $0 \le k \le m$ and $[E_0,E] \subseteq J$,
\begin{align}
\Im\!\left( \frac{d^k}{dz^k} F(E + i\epsilon) \right)
-
\Im\!\left( \frac{d^k}{dz^k} F(E_0 + i\epsilon) \right)
&=
\frac{d^k}{dx^k} \Im\!\big( F(E + i\epsilon) \big)
-
\frac{d^k}{dx^k} \Im\!\big( F(E_0 + i\epsilon) \big) \nonumber \\
&=
\int_{E_0}^{E}
\frac{d^{k+1}}{dx^{k+1}}
\Im\!\big( F(x + i\epsilon) \big)\, dx \nonumber \\
&=
\int_{E_0}^{E}
\Im\!\left(
\frac{d^{k+1}}{dz^{k+1}} F(x + i\epsilon)
\right) dx .
\end{align}

\noindent Finally, using \eqref{fnbr} and \eqref{cp-im} in the previous identity yields \eqref{lemma-est}.
\end{proof}
\noindent We now derive uniform upper bounds (independent of $\lambda_j$) for the supremum norms of $g_j$ and its derivatives.
\begin{proposition}
\label{up-bd}
Let $H_j^\omega$ be defined in \eqref{anderson} under Hypothesis~\ref{hyp}, and let $\mathcal{N}_j(\cdot)$ denote the integrated density of states (IDS) associated with $H_j^\omega$.
Then, for all $0 \le k \le m-1$ and all $\lambda_j \in [\lambda_0,\tilde{\lambda}_0]$, we have
\begin{equation}
\label{g-est}
\|g_j^{(k)}\|_\infty
\le
C_k\bigl(1+4d+\tilde{\lambda}_0 b-\lambda_0 a\bigr),
\quad
\mathcal{N}_j^{(k+1)} = g_j^{(k)} .
\end{equation}
\end{proposition}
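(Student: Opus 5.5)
The proof will apply Lemma~\ref{drv-br} to $g=g_j$ with $F(z)=\mathbb{E}\big[\langle\delta_0,(H_j^\omega-z)^{-1}\delta_0\rangle\big]$. This $F$ is the Borel transform of $\nu_j$, since $\nu_j$ has density $g_j$.

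\textbf{Choice of interval.} Take $J$ to be a bounded interval containing $[-2d,2d]+\lambda[a,b]$ for every $\lambda\in[\lambda_0,\tilde\lambda_0]$, for instance $J=[-2d+\lambda_0 a-\tfrac12,\;2d+\tilde\lambda_0 b+\tfrac12]$, using $a<0<b$ or adjusting endpoints as needed. On the Bethe lattice replace $2d$ by $2\sqrt K$. Then $|J|\le 1+4d+\tilde\lambda_0 b-\lambda_0 a$, up to the harmless normalisation of the constant. By Remark~\ref{est-frc}, $g_j$ vanishes outside $J$, so $\|g_j^{(k)}\|_\infty=\sup_{E\in J}|g_j^{(k)}(E)|$. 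Estimate~\eqref{lemma-est} then gives
\[
\|g_j^{(k)}\|_\infty\le M_k+|J|\,M_{k+1},
\qquad
M_\ell:=\sup_{\Re z\in J,\ \Im z>0}\Big|\frac{d^\ell}{dz^\ell}F(z)\Big|.
\]
It therefore suffices to bound $M_\ell$ for $0\le\ell\le m$ uniformly in $\lambda_j\in[\lambda_0,\tilde\lambda_0]$, and to set $C_k=\max(M_k,M_{k+1})$.

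\textbf{Bounding $M_\ell$.} Follow \cite{DKM,dh} and trade $z$-derivatives for derivatives in the potential. Write $\omega_0$ for the potential at the origin. By rank-one perturbation,
\[
G(z)=\langle\delta_0,(H_j^\omega-z)^{-1}\delta_0\rangle=\frac{1}{\lambda_j\omega_0-z-\Gamma(z)},
\]
where $\Gamma$ does not depend on $\omega_0$. Hence $\partial_z G$ can be expressed through $\partial_{\omega_0}$ and the derivatives of $\Gamma$. More robustly, use translation invariance in energy: shifting all $\omega_n\mapsto\omega_n+t/\lambda_j$ on a large region shifts $z$. Combining this with integration by parts against $\rho$ moves each derivative onto the density, producing factors $\rho^{(r)}$ with $r\le m+1$, all bounded by Hypothesis~\ref{hyp}(ii).

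The derivatives falling on sites $n\ne0$ leave terms of the form
\[
\mathbb{E}\Big[\prod \big|\langle\delta_{n_i},(H_j^\omega-z)^{-1}\delta_{n_{i+1}}\rangle\big|\Big].
\]
These are controlled by the exponential decay in the fractional moment bound~\eqref{frc-mnts1}. Its constants $C(s,\lambda_0)$ and $\xi(s,\lambda_0)$ are uniform for $\lambda\ge\lambda_0$, and the resulting multiple sums over sites converge uniformly. The factors $\lambda_j^{-r}$ produced by the change of variables are bounded by $\lambda_0^{-r}$. Consequently each $M_\ell$ is bounded by a constant depending only on $\ell$, $\lambda_0$, $\tilde\lambda_0$ and $\rho$.

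\textbf{Main obstacle.} The hard step is making the bound in \cite{DKM,dh}, which shows $\mathcal N_j\in C^m$, quantitatively uniform in $\lambda_j$. One must check that every constant in that argument depends on $\lambda_j$ only through $C(s,\lambda)$, $\xi(s,\lambda)$ and powers of $\lambda^{-1}$. The plan is to retrace their estimate. The expectation of the $\ell$-th derivative is written as a finite sum of terms, each being a product of Green's function entries times derivatives of $\rho$. Each such term is bounded via Hölder's inequality and~\eqref{frc-mnts1}, together with the a priori bound $|\rho^{(r)}|\le\sup_r\|\rho^{(r)}\|_\infty$. Each derivative costs one order of smoothness, and Lemma~\ref{drv-br} needs one extra. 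This is why the range stops at $k\le m-1$: it needs $M_{m}$, which uses $\rho^{(m)}$ and $\rho^{(m+1)}$.
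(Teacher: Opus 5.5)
Your proposal matches the paper's proof. Both apply Lemma~\ref{drv-br} on a bounded interval $J$ that contains the spectra for all $\lambda_j\in[\lambda_0,\tilde\lambda_0]$. Both bound $\sup|F_j^{(\ell)}|$ for $0\le\ell\le m$ uniformly in $\lambda_j$ by rerunning \cite[Theorem~3.4]{DKM} with the $\lambda$-uniform constants of \eqref{frc-mnts1}, and both set $C_k=\max\{\tilde C_k,\tilde C_{k+1}\}$; you simply spell out more of the DKM mechanism than the paper does.

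One small point: your remark ``using $a<0<b$'' is backwards. If $a<0$, the left endpoint of $J$ must be $-2d+\tilde\lambda_0 a$ rather than $-2d+\lambda_0 a$, and the paper's choice of $J$ has the same slip. This only changes the explicit constant.
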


\begin{proof}
We recall that $\mathcal{N}_j(x)
:=
\nu_j((-\infty,x])
=
\mathbb{E}\!\left[
\langle \delta_0, E_{H_j^\omega}((-\infty,x])\,\delta_0 \rangle
\right],\quad x\in\mathbb{R},$
and note that $g_j^{(k)}(x)
=
\mathcal{N}_j^{(k+1)}(x)
\in C_c(\mathbb{R}),
\quad 0\le k\le m-1 .$ Using the spectral theorem, we write the Borel transform of $g_j$ as
\begin{equation}
\label{brt}
F_j(z)
:=
\int_{\mathbb{R}} \frac{g_j(x)}{x-z}\,dx
=
\mathbb{E}\!\left[
\langle \delta_0,(H_j^\omega-z)^{-1}\delta_0\rangle
\right],
\quad \Im(z)>0 .
\end{equation}

\noindent We now use the exponential localization estimate \eqref{frc-mnts1}. Following the proof of \cite[Theorem~3.4]{DKM}, we obtain
\begin{equation}
\label{res-est}
\sup_{\Re(z)\in\mathbb{R},\,\Im(z)>0}
\left|
\frac{d^k}{dz^k}
\mathbb{E}\!\left[
\langle \delta_0,(H_j^\omega-z)^{-1}\delta_0\rangle
\right]
\right|
\le
\tilde C_k,
\quad 0\le k\le m .
\end{equation}

\noindent Here the positive constant $\tilde C_k$ is independent of $\lambda_j\ge\lambda_0$.

\noindent Since $\lambda_0\le\lambda_j\le\tilde{\lambda}_0$, the spectrum satisfies $\sigma(H_j^\omega)\subset J \quad \text{for a.e.\ }\omega$,
where $J=[-2d+\lambda_0 a,\;2d+\tilde{\lambda}_0 b]$.
Consequently $g_j^{(k)}$ is continuous with support contained in $J$. Hence we may apply Lemma~\ref{drv-br} to obtain
\begin{align*}
\sup_{x\in\mathbb{R}}|g_j^{(k)}(x)|
&=
\sup_{x\in J}|g_j^{(k)}(x)|  \\
&\le
\sup_{\Re(z)\in J,\Im(z)>0}
\left|
\Im\!\left(\frac{d^k}{dz^k}F_j(z)\right)
\right|
+
|J|
\sup_{\Re(z)\in J,\Im(z)>0}
\left|
\Im\!\left(\frac{d^{k+1}}{dz^{k+1}}F_j(z)\right)
\right|.
\end{align*}

\noindent Define $C_k:=\max\{\tilde C_k,\tilde C_{k+1}\}$. Then \eqref{g-est} follows immediately from \eqref{brt} and \eqref{res-est}.
\end{proof}

\noindent We now estimate the decay of $\widehat{g_j}$, the Fourier transform of $g_j$.

\begin{corollary}
\label{de-fr}
Under the same assumptions as in Proposition~\ref{up-bd}, we have
\begin{equation}
\label{frdy}
\bigl|\widehat{g_j}(t)\bigr|
\le
\frac{C_{m-1}\bigl(1+4d+\tilde{\lambda}_0 b-\lambda_0 a\bigr)}{|t|^{m-1}},
\quad |t|>0,\quad j=1,2.
\end{equation}
\end{corollary}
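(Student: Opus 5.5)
The plan is the standard one: trade derivatives of $g_j$ for powers of $|t|$ by integrating by parts in the Fourier integral, and then use the uniform sup-norm bound of Proposition~\ref{up-bd} together with the fact that all supports lie in one fixed compact interval.

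The convention is $\widehat{g_j}(t)=\int_{\mathbb{R}} e^{-itx}g_j(x)\,dx$; any other normalization changes only a harmless constant. By Remark~\ref{cpt} and the proof of Proposition~\ref{up-bd}, each $g_j^{(k)}$ with $0\le k\le m-1$ is continuous and supported in the fixed interval
\[
J=[-2d+\lambda_0 a,\;2d+\tilde{\lambda}_0 b],
\]
which does not depend on $j$ or on $\lambda_j\in[\lambda_0,\tilde\lambda_0]$. Hence $g_j\in C_c^{m-1}(\mathbb{R})$, and for $t\neq 0$ we may integrate by parts $m-1$ times. All boundary terms vanish because each $g_j^{(k)}$ with $k\le m-2$ is continuous and vanishes outside $J$, in particular at its endpoints. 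This gives
\[
\widehat{g_j}(t)=\frac{1}{(it)^{m-1}}\int_{J} e^{-itx}\,g_j^{(m-1)}(x)\,dx ,
\]
and therefore
\[
|\widehat{g_j}(t)|\le \frac{|J|\,\|g_j^{(m-1)}\|_\infty}{|t|^{m-1}} .
\]

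Next, Proposition~\ref{up-bd} with $k=m-1$ gives $\|g_j^{(m-1)}\|_\infty\le C_{m-1}(1+4d+\tilde\lambda_0 b-\lambda_0 a)$. Note also that $|J|=4d+\tilde\lambda_0 b-\lambda_0 a$. This yields the bound in \eqref{frdy} up to the extra factor $|J|$.

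The only real point of care is reconciling this factor $|J|$ with the displayed constant. I would absorb it into the constant, that is, rename $C_{m-1}|J|$ as $C_{m-1}$; this is legitimate because $|J|$ depends only on $d,a,b,\lambda_0,\tilde\lambda_0$ and not on $\lambda_j$. If the exact constant of \eqref{frdy} is wanted, I would instead refine the final step. One option is to bound $\int_J|g_j^{(m-1)}|$ by redoing the estimate in the proof of Proposition~\ref{up-bd} at the $L^1$ level. Another is to integrate by parts only $m-2$ times and bound $\|g_j^{(m-2)}\|_{L^1}$.

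In the case $m=1$ the statement reads $|\widehat{g_j}|\le C$. This follows directly from $|\widehat{g_j}|\le\|g_j\|_1=1$, since $\nu_j$ is a probability measure.
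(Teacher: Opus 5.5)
Your argument is correct and matches the paper's proof: integrate by parts $m-1$ times, using that $g_j\in C_c^{m-1}$ with support in the fixed interval $J$, and then apply Proposition~\ref{up-bd} with $k=m-1$. You are also right that this argument actually gives an extra factor $|J|=4d+\tilde{\lambda}_0 b-\lambda_0 a$, which the paper's ``follows directly'' silently drops; absorbing it into the constant is the correct fix, and it only rescales $D_k$ in Theorem~\ref{main}. Your alternative of integrating by parts only $m-2$ times would not work, since it yields only $|t|^{-(m-2)}$ decay.
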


\begin{proof}
Since $g_j^{(k)}\in C_c(\mathbb{R})$ for $0\le k\le m-1$, it follows that
\[
\widehat{g_j^{(k)}}(t)
=
(\mathrm{i}t)^k\,\widehat{g_j}(t),
\quad \mathrm{i}=\sqrt{-1}.
\]

\noindent In particular, for $k=m-1$ we obtain
\[
\widehat{g_j}(t)
=
\frac{1}{(\mathrm{i}t)^{m-1}}
\int_{\mathbb{R}}
e^{-\mathrm{i}tx}\,g_j^{(m-1)}(x)\,dx,
\quad j=1,2.
\]

\noindent Since $\operatorname{supp} g_j^{(m-1)} \subset [-2d+\lambda_0 a,\;2d+\tilde{\lambda}_0 b]=J$, the decay estimate \eqref{frdy} follows directly from \eqref{g-est}.
\end{proof}

\noindent We now prove the main result.

\begin{proof}[Proof of Theorem~\ref{main}]
For $j=1,2$, let $\nu_j(\cdot)
:=
\mathbb{E}\!\left[
\langle \delta_0, E_{H_j^\omega}(\cdot)\,\delta_0\rangle
\right]$ denote the density of states measure (DOSm) associated with $H_j^\omega$.
Define its Fourier transform by
\[
\widehat{\nu}_j(t)
:=
\int_{\mathbb{R}} e^{-\mathrm{i}tx}\,d\nu_j(x),
\quad \mathrm{i}=\sqrt{-1}.
\]

\noindent Using Duhamel’s formula (Theorem~\ref{DF} in the Appendix) and Fubini’s theorem, we obtain
\begin{align*}
|\widehat{\nu}_1(t)-\widehat{\nu}_2(t)|
&=
\Big|
\mathbb E\!\left[
\langle \delta_0,
(e^{-\mathrm{i}tH_1^\omega}-e^{-\mathrm{i}tH_2^\omega})
\delta_0\rangle
\right]
\Big| \\
&=
\left|
\int_0^t
\mathbb E\!\left[
\langle \delta_0,
e^{-\mathrm{i}(t-s)H_1^\omega}
(V_1^\omega-V_2^\omega)
e^{-\mathrm{i}sH_2^\omega}
\delta_0
\rangle
\right]ds
\right|.
\end{align*}

\noindent Expanding
\[
e^{-\mathrm{i}sH_2^\omega}\delta_0
=
\sum_{n\in\mathbb{Z}^d}
\langle\delta_n,e^{-\mathrm{i}sH_2^\omega}\delta_0\rangle
\,\delta_n,
\]
we obtain
\[
\langle \delta_0,
e^{-\mathrm{i}(t-s)H_1^\omega}
(V_1^\omega-V_2^\omega)
e^{-\mathrm{i}sH_2^\omega}
\delta_0
\rangle
=
\sum_{n\in\mathbb{Z}^d}
(\lambda_1-\lambda_2)\omega_n
\langle\delta_n,e^{-\mathrm{i}sH_2^\omega}\delta_0\rangle
\langle\delta_0,e^{-\mathrm{i}(t-s)H_1^\omega}\delta_n\rangle .
\]

\noindent Taking expectations, absolute values, and applying the Cauchy–Schwarz inequality,
\begin{align*}
&
\left|
\mathbb E\!\left[
\langle \delta_0,
e^{-\mathrm{i}(t-s)H_1^\omega}
(V_1^\omega-V_2^\omega)
e^{-\mathrm{i}sH_2^\omega}
\delta_0
\rangle
\right]
\right| \\
&\quad
\le
|\lambda_1-\lambda_2|
\sum_{n\in\mathbb{Z}^d}
\mathbb E\!\left[
|\omega_n|\,
|\langle\delta_n,e^{-\mathrm{i}sH_2^\omega}\delta_0\rangle|
|\langle\delta_0,e^{-\mathrm{i}(t-s)H_1^\omega}\delta_n\rangle|
\right] \\
&\quad
\le
M\,|\lambda_1-\lambda_2|\,
\mathbb E\!\left[
\sqrt{\sum_{n\in\mathbb{Z}^d}
|\langle\delta_n,e^{-\mathrm{i}sH_2^\omega}\delta_0\rangle|^2}
\sqrt{\sum_{m\in\mathbb{Z}^d}
|\langle\delta_0,e^{-\mathrm{i}(s-t)H_1^\omega}\delta_m\rangle|^2}
\right] \\
&\quad
=
M\,|\lambda_1-\lambda_2|\,
\mathbb E\!\left[
\|e^{-\mathrm{i}sH_2^\omega}\delta_0\|
\,
\|e^{\mathrm{i}(s-t)H_1^\omega}\delta_0\|
\right] \\
&\quad
=
M\,|\lambda_1-\lambda_2|.
\end{align*}

\noindent Here we used that the single site distribution is supported on $[a,b]$ (Hypothesis~\ref{hyp}), so that
$|\omega_n|\le M$ a.e. with $M:=\max\{|a|,|b|\}$ and the unitarity of the operators $e^{-\mathrm{i}tH_j^\omega}$ were then used to control the sums.

\noindent Consequently,
\begin{equation}
\label{ftme}
|\widehat{\nu}_1(t)-\widehat{\nu}_2(t)|
\le
M\,|\lambda_1-\lambda_2|\,|t|,
\quad \forall\, t\in\mathbb{R}.
\end{equation}

\noindent We have \( d\nu_j(x) = g_j(x)\,dx \); see Remark~\ref{cpt}.
Moreover, by Remarks~\ref{cpt} and~\ref{dr-cnt}, the integrated density of states $\mathcal{N}_j(x) := \nu_j((-\infty,x])
$ belongs to \( C^k(\mathbb{R}) \) for \(0\le k\le m\) and satisfies $\mathcal{N}_j^{(k+1)}(x)=g_j^{(k)}(x)\in C_c(\mathbb{R}), \quad 0\le k\le m-1$.

\noindent Thus, for every $x \in \mathbb{R}$ and $0 \le k < m - 2$, we obtain the estimate.
\begin{align*}
|\mathcal{N}_1^{(k+1)}(x)-\mathcal{N}_2^{(k+1)}(x)|
&=|g_1^{(k)}(x)-g_2^{(k)}(x)| \\
&=\left|\int_{\mathbb R} t^k e^{\mathrm{i}tx}
\big(\widehat{g}_1(t)-\widehat{g}_2(t)\big)\,dt\right|\\
&\le \int_{\mathbb R} |t|^k
|\widehat{g}_1(t)-\widehat{g}_2(t)|\,dt
=: I_1+I_2 .
\end{align*}

\noindent We split the integral at a parameter \(A>0\):
\[
I_1:=\int_{|t|\le A} |t|^k|\widehat{g}_1(t)-\widehat{g}_2(t)|\,dt,
\quad
I_2:=\int_{|t|>A} |t|^k|\widehat{g}_1(t)-\widehat{g}_2(t)|\,dt .
\]

\noindent Using \eqref{ftme}, we estimate \(I_1\) as
\[
I_1
=\int_{|t|\le A}|t|^k|\widehat{\nu}_1(t)-\widehat{\nu}_2(t)|dt
\le
M\int_{|t|\le A}|t|^{k+1}|\lambda_1-\lambda_2|dt
=
\frac{2MA^{k+2}}{k+2}|\lambda_1-\lambda_2|.
\]

\noindent For \(j=1,2\), the decay estimate \eqref{frdy} gives $|\widehat{g}_j(t)|
\le
\frac{C_{m-1}(1+4d+\tilde{\lambda}_0 b-\lambda_0 a)}{|t|^{m-1}},
~ t\neq0 .$

\noindent Hence
\begin{align*}
I_2
&\le
\int_{|t|\ge A}|t|^k|\widehat{g}_1(t)|dt
+
\int_{|t|\ge A}|t|^k|\widehat{g}_2(t)|dt \\
&\le
2\int_{|t|\ge A}
\frac{C_{m-1}(1+4d+\tilde{\lambda}_0 b-\lambda_0 a)}
{|t|^{m-k-1}}dt \\
&=
\frac{4C_{m-1}(1+4d+\tilde{\lambda}_0 b-\lambda_0 a)}
{(m-k-2)A^{m-k-2}} .
\end{align*}

\noindent Choose \(A=|\lambda_1-\lambda_2|^{-1/m}\) to balance \(I_1\) and \(I_2\).
Then
\[
|\mathcal{N}_1^{(k+1)}(x)-\mathcal{N}_2^{(k+1)}(x)|
\le
D_k|\lambda_1-\lambda_2|^{\frac{m-k-2}{m}},
\quad x\in\mathbb R ,
\]
where
\[
D_k=
\frac{2M}{k+2}
+
\frac{4C_{m-1}(1+4d+\tilde{\lambda}_0 b-\lambda_0 a)}{m-k-2},
\]
which is independent of \(\lambda_1\) and \(\lambda_2\).
Taking the supremum over \(x\) completes the proof.
\end{proof}

\appendix
\section{Appendix}
\noindent We recall the following form of Duhamel’s formula.

\begin{theorem}\label{DF}
Let $B$ be a self-adjoint operator on a Hilbert space $H$, let $V$ be a bounded self-adjoint operator, and set $A=B+V$. Then, for all $t\in\mathbb{R}$,
\begin{equation}
e^{-\mathrm{i}tA}-e^{-\mathrm{i}tB}
=
\int_0^t
e^{-\mathrm{i}(t-s)A}\, \mathrm{i}(B-A)\, e^{-\mathrm{i}sB}\,ds .
\end{equation}
\end{theorem}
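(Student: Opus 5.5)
The plan is to prove the identity in the strong sense, first on the dense domain $\mathcal{D}(B)$ and then on all of $H$ by density. Since $V$ is bounded and self-adjoint, the Kato--Rellich theorem (or simply the boundedness of $V$) gives that $A=B+V$ is self-adjoint with $\mathcal{D}(A)=\mathcal{D}(B)$. By Stone's theorem, $e^{-\mathrm{i}tA}$ and $e^{-\mathrm{i}tB}$ are strongly continuous unitary groups that leave this common domain invariant.

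Fix $t\in\mathbb{R}$ and $\psi\in\mathcal{D}(B)$, and consider the vector-valued function
\[
W(s)\psi := e^{-\mathrm{i}(t-s)A}\,e^{-\mathrm{i}sB}\psi,\qquad s\in\mathbb{R}.
\]
The key step is to show that $s\mapsto W(s)\psi$ is strongly differentiable with
\[
\frac{d}{ds}W(s)\psi = e^{-\mathrm{i}(t-s)A}\,\mathrm{i}(A-B)\,e^{-\mathrm{i}sB}\psi = e^{-\mathrm{i}(t-s)A}\,\mathrm{i}V\,e^{-\mathrm{i}sB}\psi .
\]
To do this, I write the difference quotient $h^{-1}\bigl(W(s+h)-W(s)\bigr)\psi$ as the sum of two terms:
\[
e^{-\mathrm{i}(t-s-h)A}\,h^{-1}\bigl(e^{-\mathrm{i}(s+h)B}-e^{-\mathrm{i}sB}\bigr)\psi
\quad\text{and}\quad
h^{-1}\bigl(e^{-\mathrm{i}(t-s-h)A}-e^{-\mathrm{i}(t-s)A}\bigr)e^{-\mathrm{i}sB}\psi .
\]
The first term converges to $e^{-\mathrm{i}(t-s)A}(-\mathrm{i}B)e^{-\mathrm{i}sB}\psi$. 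This uses that the operators $e^{-\mathrm{i}(t-s-h)A}$ are uniformly bounded (they are unitary) and strongly continuous in $h$, together with $e^{-\mathrm{i}sB}\psi\in\mathcal{D}(B)$. The second term converges to $e^{-\mathrm{i}(t-s)A}(\mathrm{i}A)e^{-\mathrm{i}sB}\psi$, since $e^{-\mathrm{i}sB}\psi\in\mathcal{D}(B)=\mathcal{D}(A)$.

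I expect this product-rule step, with its attention to domains and to the uniform boundedness needed to pass to the limit, to be the only genuinely delicate point. The derivative is continuous in $s$, because $V$ is bounded and both groups are strongly continuous. Hence the fundamental theorem of calculus for $H$-valued $C^1$ functions (a strong Riemann integral) gives
\[
W(t)\psi-W(0)\psi = e^{-\mathrm{i}tB}\psi-e^{-\mathrm{i}tA}\psi = \int_0^t e^{-\mathrm{i}(t-s)A}\,\mathrm{i}(A-B)\,e^{-\mathrm{i}sB}\psi\,ds .
\]
Multiplying by $-1$ yields the stated formula on $\mathcal{D}(B)$. The same argument covers $t<0$, with the integral read as oriented.

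Finally, both sides of the formula are bounded operators. The left side has norm at most $2$, and the integrand on the right is strongly continuous with norm at most $\|V\|$, so the right side has norm at most $|t|\,\|V\|$. Since $\mathcal{D}(B)$ is dense, the identity extends to all $\psi\in H$, with the integral understood in the strong sense. This is exactly how it is used in the proof of Theorem~\ref{main}.
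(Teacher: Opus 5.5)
Your proof is correct. You differentiate $s\mapsto e^{-\mathrm{i}(t-s)A}e^{-\mathrm{i}sB}\psi$ on $\mathcal{D}(B)=\mathcal{D}(A)$, apply the vector-valued fundamental theorem of calculus, and extend by density using the bound $|t|\,\|V\|$; this is the standard proof of Duhamel's formula, the signs check out, and it supplies the argument the paper leaves to its citation of Lemma~5.2 of \cite{ACDS}. In the paper's application $H_1^\omega$ and $H_2^\omega$ are bounded, so your domain bookkeeping is harmless there but not actually needed.
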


\begin{proof}
The proof can be found in Lemma~5.2 of \cite{ACDS}.
\end{proof}

\noindent We also provide a bound for the Kantorovich--Rubinstein distance between the random variables $\lambda_1\omega_0$ and $\lambda_2\omega_0$.

\begin{definition}
The Kantorovich--Rubinstein (Wasserstein--1) distance between two probability measures
$\mu_1,\mu_2$ on $\mathbb{R}$ is defined by
\[
d_{KR}(\mu_1,\mu_2)
=
\sup\left\{
\left|
\int_{\mathbb{R}} f\,d\mu_1
-
\int_{\mathbb{R}} f\,d\mu_2
\right|
:
f:\mathbb{R}\to\mathbb{R}
\text{ is 1-Lipschitz}
\right\}.
\]
\end{definition}

\begin{proposition}
\label{kr}
Let $\mu_1$ and $\mu_2$ be the distributions of the random variables
$\lambda_1\omega_0$ and $\lambda_2\omega_0$, respectively. Then
\begin{equation}
\label{kr=df}
d_{KR}(\mu_1,\mu_2)
\le
|\lambda_1-\lambda_2|\,\mathbb{E}(|\omega_0|).
\end{equation}
\end{proposition}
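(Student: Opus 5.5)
The natural route is a coupling argument. Realize both random variables on the same probability space, namely $X_1=\lambda_1\omega_0$ and $X_2=\lambda_2\omega_0$ with the same $\omega_0\sim\mu$. Then, for any test function $f$, the difference of integrals against the two laws becomes a single expectation:
\[
\int_{\mathbb{R}} f\,d\mu_1-\int_{\mathbb{R}} f\,d\mu_2=\mathbb{E}\bigl[f(\lambda_1\omega_0)-f(\lambda_2\omega_0)\bigr].
\]
This identity holds by the definition of the pushforward laws $\mu_j$ of $\lambda_j\omega_0$ (change of variables).

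Next, fix an arbitrary $1$-Lipschitz $f:\mathbb{R}\to\mathbb{R}$. Since $\omega_0$ is supported in $[a,b]$ by Hypothesis~\ref{hyp}(ii), it is bounded, and $f$ has at most linear growth. Hence both $f(\lambda_j\omega_0)$ are integrable and the identity above is legitimate. Moving the absolute value inside the expectation and applying the Lipschitz bound pointwise gives
\[
\bigl|\mathbb{E}[f(\lambda_1\omega_0)-f(\lambda_2\omega_0)]\bigr|\le \mathbb{E}\bigl[|\lambda_1\omega_0-\lambda_2\omega_0|\bigr]=|\lambda_1-\lambda_2|\,\mathbb{E}(|\omega_0|).
\]

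Finally, this bound does not depend on $f$. Taking the supremum over all $1$-Lipschitz $f$ in the definition of $d_{KR}$ yields \eqref{kr=df}.

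There is no real obstacle here. The only point needing care is the integrability of $f(\lambda_j\omega_0)$, and that follows from the compact support of $\mu$.
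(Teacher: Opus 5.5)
Your proposal is correct and matches the paper's proof. Both arguments write the difference of integrals as $\mathbb{E}[f(\lambda_1\omega_0)-f(\lambda_2\omega_0)]$ with the same $\omega_0$, move the absolute value inside, apply the $1$-Lipschitz bound pointwise, and take the supremum over $f$; your integrability remark via compact support is a point the paper leaves implicit.
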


\begin{proof}
We define the Lipschitz seminorm by
\[
\|f\|_{\mathrm{Lip}}
:=
\sup_{x\neq y}
\frac{|f(x)-f(y)|}{|x-y|}.
\]

\noindent Using the definition of the Kantorovich--Rubinstein distance, we obtain
\begin{align}
d_{KR}(\mu_1,\mu_2)
&=
\sup_{\|f\|_{\mathrm{Lip}}\le1}
\left|
\int_{\mathbb{R}} f\,d\mu_1
-
\int_{\mathbb{R}} f\,d\mu_2
\right| \nonumber\\
&=
\sup_{\|f\|_{\mathrm{Lip}}\le1}
\left|
\mathbb{E}\!\left[
f(\lambda_1\omega_0)-f(\lambda_2\omega_0)
\right]
\right| \nonumber\\
&\le
\sup_{\|f\|_{\mathrm{Lip}}\le1}
\mathbb{E}\!\left|
f(\lambda_1\omega_0)-f(\lambda_2\omega_0)
\right| \nonumber\\
&\le
\mathbb{E}\!\left|
\lambda_1\omega_0-\lambda_2\omega_0
\right| \nonumber\\
&=
|\lambda_1-\lambda_2|\,
\mathbb{E}(|\omega_0|).
\end{align}
\end{proof}

\end{document}